\newcommand{\Z}{\mathbb Z}
\newcommand{\N}{\mathbb N}
\newcommand{\Q}{\mathbb Q}
\newcommand{\sma}{\left(\begin{array}}
\newcommand{\fma}{\end{array}\right)}
\newtheorem{lem}{Lemma}[section]
\newtheorem{defn}[lem]{Definition}
\newtheorem{co}[lem]{Corollary}
\newtheorem{thm}[lem]{Theorem}
\newtheorem{prop}[lem]{Proposition}
\newenvironment{proof}{\textbf{Proof.}}{\newline\hspace*{\fill}{$\Box$}\\}
\begin{document}
\title{Acylindrical Hyperbolicity, non simplicity and SQ-universality of 
groups splitting over $\Z$}
\author{J.\,O.\,Button}

\newcommand{\Address}{{
  \bigskip
  \footnotesize

\textsc{Selwyn College, University of Cambridge,
Cambridge CB3 9DQ, UK}\par\nopagebreak
  \textit{E-mail address}: \texttt{j.o.button@dpmms.cam.ac.uk}
}}

\date{}

\maketitle
\begin{abstract}
We show, using acylindrical hyperbolicity, that a finitely
generated group splitting over $\Z$ cannot be simple. We also obtain
SQ-universality in most cases, for instance a balanced group (one where
if two powers of an infinite order element are conjugate then they are
equal or inverse) which is finitely generated and splits over $\Z$
must either be SQ-universal or it is one of exactly seven virtually
abelian exceptions. 
\end{abstract}

\section{Introduction}
An infinite word hyperbolic group can never be simple. Indeed it was shown
in \cite{ol} that a non elementary word hyperbolic group $G$ is SQ-universal,
that is every countable group embeds in a quotient of $G$, and this implies
that $G$ has uncountably many normal subgroups so is very far from being
simple. A generalisation of word hyperbolicity is that of being hyperbolic
relative to a collection of proper subgroups, with non trivial amalgamated
free products $A*_CB$ and HNN extensions $H*_C$ over finite groups $C$ being
examples. It was shown in \cite{aras} that non elementary groups which are
hyperbolic relative to a collection of proper subgroups are SQ-universal too,
thus in particular this result holds for
HNN extensions $H*_C$ with $C$ finite and $C<H$, which was not previously
known. However well before this it was established (see \cite{sch}) that all
non trivial free products are SQ-universal (excluding of course $C_2*C_2$),
and then in \cite{loss} the SQ-universality of
amalgamated free products $A*_CB$, where $C$ is finite and has index greater
than 2 in one factor and at least 2 in the other, was proven. In fact
the bulk of the work here is in showing that these groups cannot be simple.

In contrast all HNN extensions $H*_C$
are easily seen to be non simple because
such a group will always surject $\Z$. However 
it is possible for an amalgamated
free product $A*_CB$ to be simple even if finitely generated, as first
shown in \cite{cm} from 1953 by R.\,Camm where $A,B$ are finitely generated
free groups but $C$ is not finitely generated. More recently the striking
examples of Burger and Mozes in \cite{bm} are simple groups where
$A,B$ are as before, yet $C$ is a subgroup of finite index in both $A$ and
$B$. Now given that $A*_CB$ can never be simple if $C$ is trivial or finite,
but it can be simple for other choices of amalgamated subgroup, this surely
begs the natural question of whether such groups can be simple in the case
where $C\cong\Z$ is infinite cyclic. This seems even more natural in light
of the fact that groups of the form $A*_\Z B$ and $H*_\Z$ have been much
studied, especially in the context of JSJ splittings, but we can find no
instance in the literature of this question of simplicity even being
raised, let alone any direct partial results. (Of course some cases can
be deduced from other work, for instance if $A,B$ are word hyperbolic
groups and the conditions for the Bestvina - Feighn combination theorem
are satisfied then $A*_\Z B$ will also be word hyperbolic, so not simple.
Or if $A,B$ are free groups then $A*_\Z B$ will have a presentation with
more generators than relators so will surject $\Z$.)
We can further ask if any group of the form $A*_\Z B$ is SQ-universal,
outside of a small collection of examples akin to the virtually cyclic
case when $C$ is finite. We can also ask about the SQ-universality of HNN
extensions of the form $H*_\Z$, again allowing a limited list of exceptions.

A more recent development is that of a group being acylindrically hyperbolic,
which is a further generalisation of being hyperbolic relative to proper
subgroups. It is shown in \cite{os} that acylindrical hyperbolicity also
implies SQ-universality (here the definition is set up so that there
is no such thing as an elementary acylindrically hyperbolic group) and
in \cite{asos} a fairly general condition for a finite graph of groups
to be acylindrically hyperbolic is given. However, unlike when $C$ is
finite, we can have groups $A*_\Z B$ or $H*_\Z$ which are SQ-universal
but not acylindrically hyperbolic. Indeed the only criterion we will
need here for showing a group $G$ is not acylindrically hyperbolic is that
of an $s$-normal subgroup $H$ of $G$, which means that $gHg^{-1}\cap H$
is infinite for all $g\in G$. If $G$ is acylindrically hyperbolic then
an $s$-normal subgroup $H$ must be too, so possessing an
infinite cyclic $s$-normal subgroup is an obstruction to acylindrical
hyperbolicity but need not be for SQ-universality, for instance
$F_n\times\Z$ for $n\geq 2$ is SQ-universal and of the form $A*_\Z B$ and
$A*_\Z$ but is not acylindrically hyperbolic.

In this paper we apply the sufficient result in \cite{asos} on 
acylindrical hyperbolicity of the fundamental group of a finite graph
of groups with arbitrary vertex and edge groups to the case where all
edge groups are infinite cyclic (but otherwise no restriction on the
vertex groups), and use this
to show in Corollary \ref{splzco} that if $G$ is finitely generated and
is the fundamental group of a non trivial graph of groups with all edge
groups infinite cyclic then it is not simple; indeed it always has a
non trivial normal subgroup of infinite index so cannot be just infinite
either.

The method of proof here involves distinguishing between balanced and
non balanced elements $x$ of infinite order in a group $G$, where $x$
being balanced means that if $gx^mg^{-1}=x^n$ then $|m|=|n|$. We then
have a version $\Delta_x^G$ of the modular homomorphism of $x$ in $G$,
with the domain being all elements $g\in G$
such that $g\langle x\rangle g^{-1}\cap\langle x\rangle$ is non trivial.
This is a subgroup of $G$ containing the centraliser of $x$ but here
it will only be applied when this subgroup
is all of $G$. In this case, if $x$ is balanced in the
finitely generated group $G$ 
then we obtain an infinite cyclic normal subgroup of $G$ and if not
we have a surjection from $G$ to $\Z$.

As for the question of SQ-universality, it would be good if
we could say that a finitely generated group $G$ of the form
$A*_\Z B$ or $H*_\Z$ is always SQ-universal or one in a list of small 
examples. In Section 3 we first find these small examples, in that we
give all non trivial amalgamated free products $A*_\Z B$ and HNN extensions
$H*_\Z$ which do not contain a non abelian free group. This list
turns out to consist of the soluble Baumslag-Solitar groups and exactly six
other examples. Unfortunately we cannot quite show that all other finitely
generated groups of this form are SQ-universal as it is unclear how to proceed
when mixing balanced and unbalanced elements. However for a finitely
generated group $G$ of the form $A*_{\langle c\rangle} B$ where $c$ has 
infinite order and $G$ is not one of these six small examples, we show in
Theorem \ref{amsq} that $G$ is indeed SQ-universal unless $\langle c\rangle$
is $s$-normal in $G$ and
$c$ is balanced in one of the factors but not balanced in the other. 
Proposition \ref{para} even has
some partial results on SQ-universality in this case.

It is a similar story for finitely generated HNN extensions $G=H*_\Z$ with
stable letter $t$ such that $tat^{-1}=b$ for $a,b$ infinite order elements
of $H$, where Theorem \ref{hnn} says that $G$ is SQ-universal (or $\Z^2$ or
the Klein bottle group) unless $\langle a\rangle$
is $s$-normal in $G$ and $a$ is balanced in
$H$ but where $a^r=b^s$ holds in $H$ with $|r|\neq |s|$ (so that $a$ is
not balanced in $G$). Again we get partial results on SQ-universality in
this leftover case from Proposition \ref{parahnn}. We finish with 
Corollary  \ref{balco} which states that if
every infinite order element of a finitely generated group $G$ is balanced
(which is true of most groups occurring in practice) and $G$ splits
over $\Z$ then either $G$ is $\Z^2$ or one of these six small exceptions,
or $G$ is acylindrically hyperbolic, or $G$ has an infinite cyclic normal
subgroup $Z$ such that $G/Z$ is acylindrically hyperbolic. In the last
two cases $G$ is SQ-universal, so for balanced finitely generated groups
$G$ splitting over $\Z$ we have a complete result: $G$ must be
SQ-universal or is isomorphic to one of exactly seven exceptions.

\section[Acylindrical hyperbolicity of graphs of groups]{Acylindrical 
hyperbolicity of graphs of groups with infinite
cyclic edge groups}
In \cite{asos} a subgroup $H$ of a group $G$ is called {\it weakly malnormal}
in $G$ if there is $g\in G$ such that $gHg^{-1}\cap H$ is finite, and 
{\it $s$-normal} in $G$ otherwise. 
In that paper this concept was introduced in the context of acylindrical 
hyperbolicity,
with a group being acylindrically hyperbolic implying that it is
$SQ$-universal and in particular is not a simple group. The paper gives
sufficient
conditions under which the fundamental group of a finite graph of groups
is acylindrically hyperbolic, which we now review.
First it notes that if a group $G$ is acylindrically hyperbolic then any 
$s$-normal subgroup of $G$
is itself acylindrically hyperbolic (in particular it cannot be a cyclic
subgroup). 

Given a graph of groups $G(\Gamma)$ with connected graph $\Gamma$ and
fundamental group $G$, an edge $e$ is called good if both edge inclusions into
the vertex groups at either end of $e$ give rise to proper subgroups, otherwise
it is bad. A reducible edge is a bad edge which is not a self loop.
Given a finite graph of groups, we can contract the reducible edges one by
one until none are left, whereupon we say $G(\Gamma)$ is reduced.
This process does not affect the fundamental group
$G$ and the new vertex groups will form a subset of the original vertex groups. 
It could be that we are left with a single vertex and no edges, in which case
we say that the graph of groups $G(\Gamma)$ was trivial with $G$ equal to
the remaining vertex group. We then have:
\begin{thm} \label{aomn} (\cite{asos} Theorem 4.17)
Suppose that $G(\Gamma)$ is a finite reduced graph of groups which is 
non trivial and which is not just a single vertex with a single bad edge.
If there are edges $e,f$ of $\Gamma$ (not necessarily distinct) 
with edge groups $G_e,G_f$ and an element $g\in G$ such that
$G_f\cap gG_eg^{-1}$ is finite then $G$ is either virtually cyclic or
acylindrically hyperbolic.
\end{thm}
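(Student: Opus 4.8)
The plan is to pass to the Bass--Serre tree of $G(\Gamma)$ and apply Osin's characterisation of acylindrical hyperbolicity in terms of WPD elements. Let $T$ be that tree, so that $G$ acts on $T$ with quotient graph $\Gamma$ and with the stabiliser of each edge of $T$ conjugate to the corresponding edge group. Because $G(\Gamma)$ is reduced and non trivial the action is minimal, has no global fixed point, and admits an element acting loxodromically on $T$. The criterion to aim at is: a group is acylindrically hyperbolic precisely when it is not virtually cyclic and acts on some Gromov hyperbolic space with a loxodromic element satisfying the weak proper discontinuity (WPD) condition. Since a simplicial tree is $0$--hyperbolic, it is enough either to show that $G$ is virtually cyclic or to exhibit a loxodromic WPD element for the action of $G$ on $T$.

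First I would classify the action of $G$ on $T$ in the sense of Tits: a minimal action with no global fixed point is lineal (an invariant line), focal or horocyclic (a fixed end but no invariant line), or of general type. In the lineal case $T$ is a line and $G$ maps onto a subgroup of $\mathrm{Isom}(\R)$ whose kernel $K$ is normal in $G$ and contained in every edge stabiliser; then $K\leq G_f\cap gG_eg^{-1}$, which is finite, so $K$ is finite and $G$ is virtually cyclic. In the focal and horocyclic cases $G$ fixes an end of $T$, and here I would use the excluded configuration --- a single vertex with a single bad edge, which includes the ascending HNN extensions --- together with the weak malnormality hypothesis to show that these cases do not occur. So we may assume the action is of general type; in particular $G$ is not virtually cyclic, and it remains only to produce a loxodromic WPD element.

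Next, translate the hypothesis into the tree. Let $\widetilde e,\widetilde f$ be the lifts of $e,f$ in a fundamental domain, so that $G_f\cap gG_eg^{-1}=\mathrm{Stab}(\widetilde f)\cap\mathrm{Stab}(g\widetilde e)$. Since in a tree an element fixing two edges fixes the geodesic between them, this intersection equals $\mathrm{Fix}(\sigma)$, where $\sigma$ is the finite geodesic segment spanned by $\widetilde f$ and $g\widetilde e$; thus $T$ contains a finite geodesic segment $\sigma$ with $\mathrm{Fix}(\sigma)$ finite. The geometric ingredient is the standard fact that in a minimal action of general type on a simplicial tree every finite geodesic segment is contained in the axis of some loxodromic element. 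Applying this to $\sigma$ produces a loxodromic $h\in G$ with $\sigma\subseteq\mathrm{Axis}(h)$, and after replacing $h$ by a power we may assume its translation length exceeds the length of $\sigma$, so that every sufficiently long subsegment of $\mathrm{Axis}(h)$ contains a translate of $\sigma$. To verify WPD, fix a vertex $x$ on $\mathrm{Axis}(h)$ and take $\varepsilon<1$: if $k\in G$ satisfies $d(x,kx)\leq\varepsilon$ and $d(h^Nx,kh^Nx)\leq\varepsilon$ then $k$ fixes $x$ and $h^Nx$, hence fixes $[x,h^Nx]$ pointwise, and therefore, once $N$ is large, lies in $h^j\mathrm{Fix}(\sigma)h^{-j}$ for a suitable $j$, which is finite. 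Thus these sets are finite for all large $N$, so $h$ is a loxodromic WPD element and $G$ is acylindrically hyperbolic by Osin's criterion; together with the virtually cyclic outcome of the lineal case this gives the theorem.

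I expect the main obstacle to be the classification step, and in particular the argument that reducedness, the exclusion of a single vertex with a single bad edge, and the weak malnormality of the edge groups together force the action of $G$ on $T$ to be of general type rather than focal or horocyclic. Once that is secured, the remaining steps --- rewriting the hypothesis as a finite segment with finite pointwise stabiliser, covering that segment by the axis of a loxodromic, and checking the WPD condition --- form a routine application of Bass--Serre theory together with Osin's criterion.
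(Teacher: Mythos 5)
The paper itself does not prove this statement: it is quoted directly from \cite{asos} (Theorem 4.17), so the only meaningful comparison is with the original Minasyan--Osin argument, and your outline does follow its general architecture (Bass--Serre tree, elimination of elementary actions, construction of a loxodromic WPD element). However, as written your proof has two genuine gaps. The first is the one you flag yourself, and it is not a minor technicality: the whole point of the hypotheses ``reduced'' and ``not a single vertex with a single bad edge'' is to rule out the fixed-end (focal/horocyclic) case, and this requires an actual argument about graphs of groups --- one must show that a finite reduced non-trivial graph of groups which is not an ascending HNN extension (a single vertex with a single bad loop) gives a minimal action on its Bass--Serre tree fixing no end. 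Note that $G$ is not assumed finitely generated, so you cannot dismiss the horocyclic case by the usual ``finitely generated groups with all elements elliptic fix a point'' argument; you must extract loxodromic elements and the non-existence of a fixed end from the combinatorics of the reduced splitting (this occupies several lemmas in \cite{asos}). Deferring this step leaves the proof incomplete exactly where the excluded configuration enters. (Similarly, minimality of the action and the fact that in a minimal general-type action every finite segment lies on the axis of some loxodromic element are standard but do need proof or citation.)

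The second gap is in the WPD verification: the definition requires the displacement sets to be finite for \emph{every} $\varepsilon>0$, not only for $\varepsilon<1$. Bass--Serre trees are typically not locally finite, so for $\varepsilon\geq 1$ an element $k$ with $d(x,kx)\leq\varepsilon$ and $d(h^Nx,kh^Nx)\leq\varepsilon$ need not fix $x$ or $h^Nx$, and your conclusion that $k$ fixes $[x,h^Nx]$ pointwise breaks down. The standard repair in a tree: such a $k$ maps the subsegment of $[x,h^Nx]$ obtained by trimming $\varepsilon$ from each end isometrically onto a subsegment of $[kx,kh^Nx]$ overlapping it in a long piece, hence acts there as a bounded shift along the axis; any two such elements with the same shift differ by an element fixing a long subsegment pointwise, which for $N$ large contains a translate $h^j\sigma$ and so lies in the finite group $h^j\mathrm{Fix}(\sigma)h^{-j}$. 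Thus the displacement set is covered by boundedly many cosets of conjugates of $\mathrm{Fix}(\sigma)$ and is finite. With that correction, and with the missing end-fixing lemma supplied, your outline would indeed reconstruct the cited proof; as it stands, both the classification step and the WPD check are asserted rather than established.
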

This immediately gives rise to two corollaries, one for amalgamated free
products and one for HNN extensions:
\begin{co} \label{cortwo}
(i) (\cite{asos} Corollary 2.2)
If $G=A*_C B$ is a non trivial 
amalgamation of any two groups $A,B$ (meaning $C\neq A,B$)
then $G$ is acylindrically hyperbolic (or virtually cyclic) if
$C$ is not $s$-normal in $G$. 

In particular if $C$ is infinite cyclic
then $G$ is acylindrically hyperbolic
exactly when $C$ is not $s$-normal in $G$.\\
(ii) (\cite{asos} Corollary 2.3)
If $G=H*_{tAt^{-1}
=B}$ is a non ascending HNN extension of any group $H$ (meaning
$H\neq A,B$)
with
$\langle a\rangle$ and $\langle b\rangle$ both infinite cyclic
subgroups of $H$
then $G$ is acylindrically hyperbolic if $A$ is not $s$-normal
in $G$.

In particular if $A$ (and thus $B$) is infinite cyclic then
$G$ is acylindrically hyperbolic exactly when $A$ is not $s$-normal
in $G$.
\end{co}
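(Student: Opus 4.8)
The plan is to deduce both parts of Corollary~\ref{cortwo} directly from Theorem~\ref{aomn}, by presenting the given amalgam or HNN extension as the fundamental group of a graph of groups with a single edge, checking that the hypotheses of that theorem are met, and then choosing $e=f$ to be that edge so that its conclusion ``$G_f\cap gG_eg^{-1}$ finite for some $g\in G$'' becomes exactly the failure of $s$-normality of the edge group. For the two ``in particular'' assertions, where the edge group $E$ is infinite cyclic, I would combine this with the two facts recalled in this section: an $s$-normal subgroup of an acylindrically hyperbolic group is itself acylindrically hyperbolic, so cannot be $E\cong\Z$; and an acylindrically hyperbolic group is by convention never virtually cyclic. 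One extra elementary step is needed, namely ruling out the ``virtually cyclic'' alternative of Theorem~\ref{aomn} when $E\cong\Z$.

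For (i): take the graph of groups with two vertices carrying $A$ and $B$ and one edge carrying $C$, whose fundamental group is $A*_CB$. The hypothesis $C\neq A,B$ says precisely that both edge inclusions $C\hookrightarrow A$ and $C\hookrightarrow B$ are proper, i.e.\ the edge is good; hence there are no reducible edges, the graph of groups is already reduced and non trivial, and since it has two vertices it is not a single vertex with a single bad edge. Theorem~\ref{aomn} therefore applies, and taking $e=f$ to be the unique edge we get that $G$ is virtually cyclic or acylindrically hyperbolic whenever some $g\in G$ makes $C\cap gCg^{-1}$ finite, that is, whenever $C$ is not $s$-normal in $G$. If in addition $C\cong\Z$, then $s$-normality of $C$ would exhibit it as an acylindrically hyperbolic infinite cyclic group, which is impossible, so acylindrical hyperbolicity of $G$ forces $C$ not $s$-normal; conversely if $C$ is not $s$-normal we obtain ``virtually cyclic or acylindrically hyperbolic'', and the first option is excluded because a non trivial amalgam has an infinite Bass--Serre tree, so at least one of $[G:A],[G:B]$ is infinite and hence $[G:C]=\infty$, whereas every infinite subgroup of a virtually cyclic group has finite index. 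This gives the stated equivalence.

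For (ii): take the graph of groups with a single vertex carrying $H$ and a single self-loop whose two edge inclusions have images $A$ and $B$ in $H$, whose fundamental group is $H*_{tAt^{-1}=B}$. The non-ascending hypothesis $H\neq A,B$ says both edge inclusions are proper, so the edge is good, and as a self-loop is never a reducible edge the graph of groups is reduced, non trivial, and --- being a single vertex with a single \emph{good} edge --- avoids the excluded configuration of Theorem~\ref{aomn}. Choosing $e=f$ to be the self-loop, we conclude that $G$ is virtually cyclic or acylindrically hyperbolic whenever $A\cap gAg^{-1}$ is finite for some $g$, i.e.\ whenever $A$ is not $s$-normal. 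To reach the cleaner statement I would note that such a $G$ is never virtually cyclic: the homomorphism $G\to\Z$ with $t\mapsto 1$ and $H\mapsto 0$ has kernel containing $H$, which is infinite (it contains $A$ in the relevant case $A\cong\Z$; in general it is the fundamental group of a bi-infinite line of copies of $H$ amalgamated over copies of $A$, infinite since $A,B\neq H$), while a virtually cyclic group surjecting $\Z$ has finite kernel. The ``in particular'' clause is then finished exactly as in (i), using that $\Z$ is not acylindrically hyperbolic.

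The corollaries carry essentially no content beyond Theorem~\ref{aomn}; the only thing to get right is the bookkeeping --- translating ``non trivial amalgam'' and ``non-ascending HNN extension'' into the language of good, bad and reducible edges and of reduced graphs of groups, and checking that we land outside the one exceptional configuration --- together with the routine observations that dispose of the ``virtually cyclic'' possibility when the edge group is $\Z$. The genuinely hard work is entirely inside Theorem~\ref{aomn}, i.e.\ \cite{asos} Theorem~4.17, which we use as a black box.
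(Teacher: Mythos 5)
Your derivation is correct and is essentially the intended one: the paper gives no independent proof, simply quoting \cite{asos} Corollaries 2.2 and 2.3 as ``immediate'' consequences of Theorem \ref{aomn}, and your argument (one good edge, take $e=f$, apply the theorem, then use the quoted fact that an $s$-normal subgroup of an acylindrically hyperbolic group cannot be infinite cyclic) is exactly that deduction. Your extra bookkeeping ruling out the virtually cyclic alternative via the infinite Bass--Serre tree in (i) and the surjection to $\Z$ with infinite kernel in (ii) is sound and fills in the only details the paper leaves implicit.
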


The following is a definition from \cite{wsox}:
\begin{defn}
A group $G$ is called {\bf balanced} 
if for any element $x$ in $G$ of infinite order
we have that $x^m$ conjugate to $x^n$ implies that $|m|=|n|$.\\
Here we will also define:\\ 
A {\bf balanced element} in a group $G$
is an element $x$ in $G$ of infinite order
such that if we have $m,n\in\Z$
with $x^m$ conjugate to $x^n$ in $G$ then $|m|=|n|$. 
\end{defn} 
Thus a group is balanced if and only if all its elements of infinite order
are balanced. Examples of balanced groups are all word hyperbolic groups, 
all 3-manifold
groups, groups acting properly and cocompactly on a CAT(0) space, groups
that are subgroup separable and any subgroup of $GL(n,\Z)$.
As for unbalanced groups, by far the most common examples
are groups containing a Baumslag-Solitar subgroup
$BS(m,n)=\langle a,b\,|\,ba^mb^{-1}=a^n\rangle$
where $|m|\neq |n|$.

Suppose now that we have a cyclic subgroup $\langle x\rangle$ which is
$s$-normal in a group $G$ (and hence $x$ has infinite order).
This means that for any $g\in G$, there
exist non zero integers $m,n$ such that $gx^mg^{-1}=x^n$. Although $m$
and $n$ of course depend on $g$ and are not 
even well defined for a particular $g$, it is easily checked that
the map $\Delta_x^G$ from $G$ to the non zero multiplicative rationals
$\Q^*$ given by $\Delta_x^G(g)=m/n$
is well defined and is even a homomorphism, which we call 
the {\bf modular homomorphism} of $x$ in $G$ (in line with other cases
such as in generalised Baumslag-Solitar groups). In particular the
element $x$ is balanced in $G$ if and only if the image $\Delta_x^G(G)$
of the modular homomorphism is contained in $\{\pm 1\}$.
  
We can now use the above to show that a finitely generated
simple group cannot split over $\Z$.
Indeed we will actually show that a finitely generated group splitting
over $\Z$ has a non trivial normal subgroup of infinite index. 
\begin{thm} \label{splz}
If $G(\Gamma)$ is a non trivial graph of groups with finitely generated
fundamental group $G$ and where all edge groups are
infinite cyclic 
then $G$ is either:\\
(i) acylindrically hyperbolic or\\
(ii) has a homomorphism onto $\Z$ or\\
(iii) has an infinite cyclic normal subgroup.\\
\end{thm}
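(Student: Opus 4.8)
The plan is to reduce to the case of a reduced graph of groups and then split into cases according to whether Theorem \ref{aomn} applies. First I would invoke the reduction process described before Theorem \ref{aomn}: contract all reducible edges to obtain a reduced graph of groups with the same fundamental group $G$ and whose edge groups still form a subset of the original (hence all remain infinite cyclic). If this reduced graph of groups is trivial, then $G$ equals a vertex group; but the contraction process only removes bad edges that are not self-loops, and one checks that starting from a non-trivial graph of groups the only way to reach the trivial graph is through a sequence that in fact shows $G$ was built from HNN-type pieces — more carefully, if the reduced graph is a single vertex with no edges this forces the original splitting to have been, after contraction, vacuous, which contradicts non-triviality unless there was an ascending HNN structure; I should handle this boundary case by noting that a non-trivial splitting surviving to something with no good edges and no loops cannot happen, or else directly produce the surjection to $\Z$ from the HNN structure. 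So assume the reduced graph of groups is non-trivial.

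Next, if the reduced graph of groups is \emph{not} just a single vertex with a single bad edge, I would try to apply Theorem \ref{aomn}. If there exist edges $e,f$ and $g\in G$ with $G_f\cap gG_eg^{-1}$ finite, then $G$ is virtually cyclic or acylindrically hyperbolic. In the acylindrically hyperbolic case we are in conclusion (i). In the virtually cyclic case, $G$ is finitely generated and virtually cyclic, so it is either finite or has a finite-index copy of $\Z$; a finitely generated virtually cyclic group that splits non-trivially over $\Z$ must be infinite (edge groups are infinite cyclic), and an infinite virtually cyclic group has either $\Z$ as a quotient (the two-ended non-dihedral type, where the unique maximal normal finite subgroup has quotient $\Z$ or $D_\infty$, the former surjecting $\Z$) — actually more simply, $G$ has a finite-index infinite cyclic subgroup, and taking its normal core gives an infinite cyclic normal subgroup, landing in conclusion (iii). (If $G$ were finite it could not contain $\Z$ as an edge group, so this does not arise.)

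The remaining case — and the one that does the real work — is when for all edges $e,f$ and all $g\in G$ the intersection $G_f\cap gG_eg^{-1}$ is infinite, or when the reduced graph of groups is a single vertex with a single bad edge (an ascending HNN extension $H*_{tAt^{-1}=B}$ with $A$ or $B$ equal to $H$). In the ascending HNN case, $G$ surjects $\Z$ by sending $t\mapsto 1$ and $H\mapsto 0$, giving (ii). In the all-intersections-infinite case, pick any single edge $e$ with edge group $\langle x\rangle\cong\Z$; then $gxg^{-1}$-conjugates of $\langle x\rangle$ all meet $\langle x\rangle$ infinitely, but I actually need $\langle x\rangle$ itself to be $s$-normal in $G$, i.e. $g\langle x\rangle g^{-1}\cap\langle x\rangle$ infinite for all $g\in G$ — this follows from the hypothesis applied with $f=e$. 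Now I can form the modular homomorphism $\Delta_x^G\colon G\to\Q^*$. If $x$ is not balanced in $G$, the image $\Delta_x^G(G)$ is an infinitely generated... no — it is a finitely generated subgroup of $\Q^*$ (since $G$ is finitely generated) not contained in $\{\pm1\}$, hence surjects onto $\Z$ (a non-trivial finitely generated subgroup of $\Q^*$ has $\Z$ as a quotient unless it is $\{\pm1\}$), giving conclusion (ii). If $x$ is balanced in $G$, then $\Delta_x^G(G)\subseteq\{\pm1\}$, so the subgroup $N=\bigcap_{g\in G}g\langle x\rangle g^{-1}$... I would instead argue: for each $g$, $g x g^{-1}=x^{\pm1}$ after passing to a bounded power; more precisely balancedness plus $s$-normality gives that $\langle x^k\rangle$ is normal in $G$ for a suitable $k$ — concretely, the set $\{g x g^{-1}\}$ lies in $\langle x\rangle$ up to taking the subgroup where $\Delta$-values being $\pm1$ forces $g x^m g^{-1}=x^{\pm m}$, and finite generation bounds things so that some $\langle x^k\rangle$ is genuinely normal, landing in (iii).

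The main obstacle I anticipate is the last step: upgrading "$\langle x\rangle$ is $s$-normal and $x$ is balanced in $G$" to "$G$ has an honest infinite cyclic normal subgroup." The subtlety is that $s$-normality only gives $g\langle x\rangle g^{-1}\cap\langle x\rangle$ infinite, i.e. $gx^mg^{-1}=x^n$ for \emph{some} nonzero $m,n$ depending on $g$, and balancedness gives $|m|=|n|$, so $gx^mg^{-1}=x^{\pm m}$; one then wants a single power $x^k$ with $gx^kg^{-1}=x^{\pm k}$ for \emph{all} $g$ simultaneously, which requires a finite-generation / bounded-denominator argument on how the relevant exponents vary over a generating set of $G$. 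Getting this uniformity cleanly — presumably by looking at the action of $G$ on $\langle x\rangle\otimes\Q$ and using that the image in $\mathrm{GL}(1,\Q)=\Q^*$ is $\{\pm1\}$ while controlling the "shift" part — is where the care is needed; everything else is bookkeeping about graph-of-groups reduction and elementary facts about virtually cyclic groups and finitely generated subgroups of $\Q^*$.
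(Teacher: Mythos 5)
Your overall route is essentially the paper's: reduce the graph of groups, get acylindrical hyperbolicity from the Minasyan--Osin criterion unless an infinite cyclic edge group $\langle x\rangle$ is $s$-normal in $G$, and then split according to whether $x$ is balanced, using the absolute value of the modular homomorphism to produce a surjection onto $\Z$ (via a nontrivial finitely generated torsion-free abelian image) in the unbalanced case. The differences are minor: the paper first reduces to a single splitting $A*_{\langle c\rangle}B$ or an HNN extension over $\Z$, dispatches every HNN case at once by the surjection to $\Z$, and quotes Corollary \ref{cortwo}; you apply Theorem \ref{aomn} to the whole reduced graph and handle the virtually cyclic alternative by taking the normal core of a finite-index infinite cyclic subgroup, which is fine. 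Your worry about the reduction collapsing to a single vertex is moot, since the paper defines triviality of $G(\Gamma)$ precisely by that outcome, so the hypothesis excludes it.

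The one step you leave open, and flag as the main anticipated obstacle, is in fact not delicate, and no action on $\langle x\rangle\otimes\Q$ or control of any ``shift part'' is needed. Fix a finite generating set $g_1,\dots,g_s$ of $G$. For each $i$, $s$-normality gives $k_i>0$ and $l_i\ne 0$ with $g_ix^{k_i}g_i^{-1}=x^{l_i}$, and balancedness of $x$ in $G$ forces $l_i=\pm k_i$. Let $p$ be a common multiple of $k_1,\dots,k_s$; raising each relation to the appropriate power gives $g_ix^{p}g_i^{-1}=x^{\pm p}$, hence $g_i\langle x^{p}\rangle g_i^{-1}=\langle x^{p}\rangle$ for every $i$. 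Since the normaliser of $\langle x^{p}\rangle$ is a subgroup of $G$ containing a generating set, it is all of $G$, so $\langle x^{p}\rangle$ is an infinite cyclic normal subgroup and conclusion (iii) holds. The two observations you were missing are that it suffices to verify the condition on the finitely many generators (normality then propagates automatically), and that uniformity over those generators comes simply from passing to a common multiple, since $gx^kg^{-1}=x^{\pm k}$ implies $gx^{mk}g^{-1}=x^{\pm mk}$. This is exactly the argument the paper gives, so with this insertion your proof is complete and matches the paper's.
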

\begin{proof} 
As $G$ is finitely generated,
we can assume that $\Gamma$ is a finite graph, and then we can reduce
so that either $G=A*_CB$ for $\Z\cong
\langle c\rangle=C\neq A,B$ or $G=H*_{tAt^{-1}=B}$ for $A,B\cong\Z$.
But in the latter case of an HNN extension, $G$ surjects to $\Z$
anyway.
We now apply Corollary \ref{cortwo} in the case of an amalgamated
free product to obtain acylindrical hyperbolicity
of $G$, unless $\langle c\rangle$ is $s$-normal in $G$ which we now
assume.

First suppose that $c$ is also balanced in $G$.
Then for all $g\in G$ we can find
integers $k>0$ and $l\neq 0$, both depending on $g$,
such that $gc^kg^{-1}=c^l$. But $c$ being balanced in $G$ means that
$l=\pm k$. So on taking a finite generating set
$g_1,\ldots ,g_s$ for $G$, we have for each $1\leq i\leq s$ an integer
$k_i$ with $g_ic^{k_i}g_i^{-1}=c^{\pm k_i}$. Consequently we can find
a common power $p$ that
works for all of this set and hence for all of $G$, thus $\langle c^p\rangle$
is normal in $G$. 

If however $c$ is not balanced in $G$ then the image of the
modular homomorphism $\Delta_c^G$ of $c$ in $G$ will not be
contained in $\{\pm 1\}$. But then
$|\Delta_c^G|$ provides a homomorphism from $G$ to the positive rationals
$\Q^+$
which is non trivial, thus the image is an infinite torsion free abelian
group which is finitely generated, thus must be $\Z^n$.
\end{proof} 

\begin{co} \label{splzco} 
If $G$ is a finitely generated group splitting non trivially
over $\Z$ then $G$ has infinitely many non trivial normal subgroups, at
least one of which has infinite index in $G$. Consequently $G$ is not
simple or just infinite and nor is any finite index subgroup of $G$.
\end{co}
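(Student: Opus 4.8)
The plan is to apply Theorem~\ref{splz} to $G$, to treat each of its three conclusions in turn, and then to deduce the assertion about a finite-index subgroup $H\leq G$ by rerunning the same analysis for $H$. Throughout I will use one ambient fact not contained in the theorem: a finitely generated group splitting non trivially over $\Z$ is never virtually cyclic, and neither is any of its finite-index subgroups. For $G=A*_CB$ with $\Z\cong C\neq A,B$ this is because, were $G$ virtually cyclic, the infinite subgroup $C$ would have finite index in $G$, hence its core $D=\bigcap_{g\in G}gCg^{-1}$ would be a finite-index normal subgroup of $G$ contained in $C$; but $D$ is precisely the kernel of the action of $G$ on the Bass--Serre tree $T$, so the finite group $G/D$ would act on $T$ and fix a vertex, forcing some vertex group to be all of $G$ and contradicting the splitting being non trivial. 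An HNN extension $H*_{tAt^{-1}=B}$ with $A\cong\Z$ maps onto $\Z$ with kernel containing $H$, so has infinite kernel and is not virtually cyclic, and a finite-index subgroup of a non virtually cyclic group is again not virtually cyclic.

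Now I would take the three cases of Theorem~\ref{splz}. If $G$ is acylindrically hyperbolic, then it is SQ-universal by \cite{os}, and, as recalled in the introduction, an SQ-universal group has uncountably many normal subgroups; since $G$ is finitely generated it has only countably many subgroups of finite index, so uncountably many of its normal subgroups are non trivial of infinite index. If $G$ surjects $\Z$ via $\phi$, then the proper subgroups $\phi^{-1}(n\Z)$, $n\geq 2$, are infinitely many distinct non trivial normal subgroups, while $\ker\phi$ is normal of infinite index and is non trivial because $G\not\cong\Z$ (being not virtually cyclic). If $G$ has an infinite cyclic normal subgroup $Z=\langle z\rangle$, then conjugation by any element of $G$ sends $z$ to $z^{\pm1}$, so every $\langle z^n\rangle$ with $n\geq 1$ is normal; these are infinitely many distinct non trivial normal subgroups, and each has infinite index because $[G:Z]=\infty$, again as $G$ is not virtually cyclic. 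So in every case $G$ has infinitely many non trivial normal subgroups, at least one of infinite index, and hence, being infinite, is neither simple nor just infinite.

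For a finite-index subgroup $H\leq G$ I would argue the same way. If $G$ is acylindrically hyperbolic then so is $H$, since acylindrical hyperbolicity is inherited by finite-index subgroups, and the SQ-universality argument applies to $H$. If $G$ surjects $\Z$ then $\phi(H)$ has finite index in $\Z$ (as $[G:H]<\infty$), so $H$ surjects $\Z$, and since $H$ is not virtually cyclic its kernel there is non trivial of infinite index. If $G$ has an infinite cyclic normal subgroup $Z$, then $Z\cap H$ is infinite cyclic and normal in $H$, of infinite index in $H$ (otherwise $H$, and therefore $G$, would be virtually cyclic), and its powers again give infinitely many non trivial normal subgroups of $H$. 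Thus $H$ likewise has infinitely many non trivial normal subgroups, at least one of infinite index, so $H$ is neither simple nor just infinite.

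The step I expect to carry the real content is the opening one, that $G$ and its finite-index subgroups are not virtually cyclic. Without it, case~(iii) of Theorem~\ref{splz} would not visibly produce a normal subgroup of \emph{infinite} index --- a virtually cyclic group such as $\Z$ or the infinite dihedral group has no non trivial normal subgroup of infinite index --- and it is exactly the hypothesis that the splitting is over $\Z$, and not over a finite subgroup, that rules these out. Everything else is a direct consequence of Theorem~\ref{splz} together with standard facts about SQ-universal and acylindrically hyperbolic groups.
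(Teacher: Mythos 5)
Your proposal is correct, and its skeleton is the same as the paper's: apply Theorem \ref{splz} and produce normal subgroups in each of the three cases (uncountably many from SQ-universality in the acylindrically hyperbolic case, preimages of subgroups of $\Z$ in the surjection case, powers of the generator in the infinite cyclic normal subgroup case). Where you genuinely diverge is in two places. First, you insert a preliminary lemma, proved via the core of $C$ and the Bass--Serre tree, that a finitely generated group splitting non trivially over $\Z$ (and any finite index subgroup of it) is not virtually cyclic, and you use this to show directly that each $\langle z^n\rangle$, respectively $\ker\phi$, has infinite index; the paper instead notes the kernel is non trivial because $\Z$ does not split non trivially over $\Z$, and in case (iii) appeals to the chain $\langle c^{np}\rangle$ and ``their intersection of infinite index and normal too'' --- that intersection is of course trivial, so your explicit non-virtually-cyclic argument is the tighter way to certify a \emph{non trivial} normal subgroup of infinite index in that case. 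Second, for a finite index subgroup $H$ the paper simply remarks that $H$ again splits over $\Z$ (which holds since $H$ acts on the same tree with no global fixed point and with edge stabilizers of finite index in conjugates of $C$, hence infinite cyclic) and re-applies the corollary; you instead rerun the three cases for $H$ using inheritance of acylindrical hyperbolicity (which, if you want it from within the paper, follows because a finite index subgroup is $s$-normal), restriction of the surjection to $\Z$, and intersection with the cyclic normal subgroup. The paper's route is shorter and self-referential; yours avoids re-quoting Bass--Serre for $H$ at the cost of the (true, but unjustified in your write-up) finite-index inheritance claims. Both are valid.
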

\begin{proof}
On application of Theorem \ref{splz}, either $G$ is SQ universal in which
case it has uncountably many normal subgroups
(only countably many of which can have finite index as $G$ is finitely
generated), or it surjects to $\Z$ (therefore to every finite cyclic
group) and this kernel is non trivial as $\Z$ does not split non trivially
over $\Z$, or there is $p>0$ such that $\langle c^p\rangle$ is an infinite
cyclic normal subgroup of $G$, hence this is a proper subgroup, as are
the distinct proper normal subgroups $\langle c^{np}\rangle$ for $n\in\N$,
with their intersection of infinite index and normal too.
The same holds for any finite index subgroup of $G$ as can been in a
variety of ways, not least because it also splits over $\Z$.
\end{proof}

Note: it is quite possible in the amalgamated free product case
that $G$ has no proper finite index subgroups,
for instance if $A$ and $B$ have no proper finite index subgroups then
nor will $A*B$ or $A*_CB$.
 
\section{SQ-universality}   

We can now ask whether the stronger property of being SQ-universal
holds for finitely generated groups splitting over $\Z$, given that
this is true for all acylindrically hyperbolic groups. However this will fail
for the groups mentioned below in Proposition \ref{excpt},
because being SQ universal implies containing a
non abelian free group. Hence we first consider this case by
using the following well known proposition.
\begin{prop} \label{nofree} \hfill\\
(i) If $G=A*_CB$ is a non trivial amalgamated free product so that
$C\neq A,B$ then $G$ contains a non abelian free group, unless neither
of $A$ and $B$ do and $C$ has index 2 in both $A$ and $B$, in which case
$G$ does not.\\
(ii) If $G=H*_{tAt^{-1}=B}$ is an HNN extension then $G$ contains a non abelian
free group, unless $H$ does not and at least one of $A$ and $B$ is equal
to $H$, in which case $G$ does not.
\end{prop}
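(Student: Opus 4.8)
The plan is to prove the two parts of Proposition \ref{nofree} separately, each by standard Bass--Serre theory applied to the action of $G$ on the tree $T$ associated to the splitting. The key dichotomy is whether two distinct vertices can be translated so that their stabilisers intersect suitably, or equivalently whether the action admits a hyperbolic element in ``general position'' with respect to another.

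For part (i): I would first show that if $A*_CB$ contains no non-abelian free group, then neither $A$ nor $B$ can, since both sit inside $G$ as subgroups. So assume $A$ and $B$ are free-of-rank-$\leq 1$-free in the relevant sense (more precisely, they contain no non-abelian free subgroup). The positive direction is the substantive one: if $C$ has index at least $3$ in $A$ (say) and index at least $2$ in $B$, I would exhibit a ping-pong pair. Pick $a_1,a_2\in A$ representing distinct non-trivial cosets of $C$ in $A$ with $a_1\neq a_2$ modulo $C$, and $b\in B\setminus C$. Then elements such as $g_1=a_1 b$ and $g_2=a_2 b$ (or suitable conjugates/powers) act as hyperbolic isometries of $T$ with axes that overlap in a bounded segment, and a ping-pong argument on the ends of $T$ shows $\langle g_1,g_2\rangle$ is free of rank $2$. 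The case analysis reduces to: $C$ has index $\leq 2$ in both $A$ and $B$. If the index is $2$ in both, $G$ maps onto the infinite dihedral group (and indeed $C$ is normal of index $\leq 2$ in $G$... no, rather $G$ is virtually $C$), so $G$ is virtually cyclic when $C$ is cyclic, and in general $G$ has $C$ as a subgroup of index $\leq$ something making it contain no non-abelian free group; this matches the stated exception. If $C$ has index $2$ in $A$ but index $\geq 3$ in $B$ (or $\geq 2$ with one of $A,B$ bigger), the ping-pong argument above still applies, so $G$ does contain a non-abelian free group --- hence the only genuine exception is index exactly $2$ on both sides.

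For part (ii): again, $H$ embeds in $G=H*_{tAt^{-1}=B}$, so if $G$ has no non-abelian free subgroup then neither does $H$. For the converse, if $A\neq H$ and $B\neq H$, pick $h\in H\setminus A$ and $h'\in H\setminus B$; then I would check that $t$ and $h t h^{-1}$ (or $tht^{-1}\cdot t$, choosing representatives of non-trivial double cosets carefully) are hyperbolic on $T$ with axes meeting in a bounded segment, giving a free pair by ping-pong on $\partial T$. If instead $A=H$ (so the HNN extension is ascending on that side, and in fact an isomorphism $t H t^{-1}=B\leq H$ forces $B=H$ too when we also demand... no), when $A=H$ the subgroup $H$ is normalised appropriately and $G$ is an ascending HNN extension $H*_\phi$; such a group contains a non-abelian free group iff $H$ does, since every finitely generated subgroup of $G$ is conjugate into a subgroup commensurable with a subgroup of $H$ --- this is the standard structure of strictly ascending HNN extensions. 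Hence the exception is exactly ``$H$ has no non-abelian free subgroup and $A=H$ or $B=H$''.

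The main obstacle I anticipate is organising the ping-pong / Bass--Serre bookkeeping cleanly enough to cover all the degenerate cases at once (e.g. when $A$ or $B$ is itself small, or when the relevant double cosets are hard to separate), and in part (ii) making rigorous the claim that ascending HNN extensions do not create new free subgroups --- this needs the observation that a finitely generated subgroup $K\leq H*_\phi$ either is conjugate into $H$ or surjects onto $\Z$ with kernel an ascending union of conjugates of subgroups of $H$, in which case one applies the result inductively/directly. Since this proposition is labelled ``well known'', I would keep the write-up brief, citing a standard reference (e.g. work on accessibility or the Karrass--Solitar subgroup theorems) and only sketching the ping-pong construction.
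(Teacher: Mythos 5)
The paper itself offers no proof of this proposition: it is invoked as ``well known'' and used as a black box, so there is nothing internal to compare against. Judged on its own terms, your outline follows the standard route (Bass--Serre theory plus ping-pong for the generic case, and the structure of ascending HNN extensions for the exceptional case of (ii)), and that strategy does work; but several of your concrete steps are wrong or incomplete as stated. First, the pair $g_1=a_1b$, $g_2=a_2b$ need not be free: in $C_3*C_2=\langle a\rangle*\langle b\rangle$ with $a_1=a$, $a_2=a^2$, the element $g_1g_2^{-1}$ is a nontrivial power of $a$, so $\langle g_1,g_2\rangle$ has torsion. Passing to high powers does rescue this, but only after you show the two axes overlap in a bounded segment, and that requires controlling both left and right cosets of $C$ (the two axes branch at the $A$-vertex iff $a_2^{-1}a_1\notin C$, but at the other end iff $a_2a_1^{-1}\notin C$), so ``distinct nontrivial cosets'' alone is not enough; alternatively take one hyperbolic element and a suitable conjugate of it. Similarly in (ii), choosing $h\in H\setminus A$ does not bound the overlap of the axes of $t$ and $hth^{-1}$ (if $h\in B$ they share the forward edge and possibly a ray); the clean choice is $h\in H\setminus(A\cup B)$, which exists because no group is the union of two proper subgroups, and then the two axes meet in a single vertex and powers ping-pong.

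Second, the exceptional cases --- which are exactly the direction the paper actually needs for Proposition \ref{excpt} --- are not really proved in your sketch. In (i) with both indices equal to $2$, your assertion that ``$G$ is virtually $C$'' is false: $C$ is normal in $G$ with $G/C\cong C_2*C_2$ infinite, so $C$ has infinite index. What is needed is the (easy, but not free) fact that containing no nonabelian free subgroup is closed under extensions: if $F\le G$ is free of rank $2$, then $F\cap C$ is either trivial (impossible, as $F$ would embed in $C_2*C_2$), of finite index in $F$ (giving a nonabelian free subgroup of $C\le A$), or a nontrivial normal subgroup of infinite index in $F$, hence infinitely generated free, again landing in $C$. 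The same extension argument is what finishes your ascending case in (ii), where $G=N\rtimes\Z$ with $N=\bigcup_{n\ge 0}t^{-n}Ht^{n}$ an increasing union of copies of $H$; note that your first formulation there (``every finitely generated subgroup of $G$ is conjugate into a subgroup commensurable with a subgroup of $H$'') is not correct as stated ($G$ itself is a counterexample), though your second formulation via the kernel of $G\to\Z$ is the right one. With these repairs the argument is complete and is the standard proof one would cite.
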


Thus now we need to see which groups occur in the above when $C$ is isomorphic
to $\Z$, as they cannot possibly be SQ-universal despite splitting
non trivially over $\Z$. 
\begin{prop} \label{excpt}
If the group $G$ splits as an HNN extension over $\Z$ but does not contain a
non abelian free group then $G$ is isomorphic to one of the soluble
Baumslag - Solitar groups $BS(1,n)$ for $n\in\Z\setminus \{0\}$.\\
\hfill\\
If the group $G$ splits as a non trivial amalgamated free product 
over $\Z$ but does not contain a
non abelian free group then $G$ is isomorphic to one of the following
(mutually non isomorphic) six groups:\\
(i) The group $\langle a,b|a^2=b^2\rangle$ which is the Klein bottle group
$K$ (and is also $BS(1,-1)$, so is the only group here splitting over $\Z$
both as an HNN extension and an amalgamated free product).\\
(ii) The group
$\langle s,t,c|[s,t],c^2,csc^{-1}=t,ctc^{-1}=s\rangle$\\
which is $\Z^2\rtimes C_2$, where $C_2$ swaps this free generating set
of $\Z^2$.\\
(iii) The group
$\langle a,b,c|a^2=b^2,c^2,cac^{-1}=b,cbc^{-1}=a\rangle$\\
which is $K\rtimes C_2$ where the generators for $K$ in (i) are swapped.\\
(iv) The group
$\langle c,d,z|c^2,d^2,[c,z],[d,z]\rangle$\\
which is $(C_2*C_2)\times\Z$.\\
(v) The group
$\langle t,c,d|c^2,d^2,ctc^{-1}=t^{-1},dtd^{-1}=t\rangle$\\
which is $\Z\rtimes (C_2*C_2)$ where one $C_2$ factor inverts
$\Z$ and one fixes $\Z$.\\
(vi) The group
$\langle s,t,c|[s,t],c^2,csc^{-1}=s^{-1},ctc^{-1}=t^{-1}\rangle$\\
which is $\Z^2\rtimes C_2$ where $C_2$ inverts the elements of $\Z^2$.
\end{prop}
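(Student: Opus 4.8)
The plan is to reduce everything, via Proposition \ref{nofree}, to a short classification of the possible vertex groups followed by a finite enumeration.

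For the HNN statement, write $G=H*_{tAt^{-1}=B}$ with $A,B\cong\Z$. Proposition \ref{nofree}(ii) tells us that the absence of a non-abelian free subgroup forces $H$ to have no such subgroup and $A=H$ or $B=H$; as $A$ and $B$ are infinite cyclic this means $H\cong\Z$, so $G=BS(m,n)=\langle a,t\,|\,ta^mt^{-1}=a^n\rangle$ with $m,n\neq0$. Applying Proposition \ref{nofree}(ii) once more, now with $H=\langle a\rangle$, $A=\langle a^m\rangle$, $B=\langle a^n\rangle$, shows that $G$ contains no non-abelian free subgroup exactly when $|m|=1$ or $|n|=1$, and in that case inverting $t$ if necessary rewrites $G$ as $BS(1,k)$ for some $k\neq0$. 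Conversely each such soluble group has no non-abelian free subgroup and genuinely splits over $\Z$ as the displayed HNN extension, and the groups $BS(1,k)$ ($k\neq0$) are pairwise non-isomorphic.

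For the amalgamated product statement, write $G=A*_CB$ with $C=\langle c\rangle\cong\Z$ and $C\neq A,B$. By Proposition \ref{nofree}(i) the hypothesis forces $[A:C]=[B:C]=2$. A group possessing an index-$2$ infinite cyclic subgroup is two-ended, and analysing the extension $1\to C\to A\to C_2\to1$ — the generator of $C_2$ acts on $C\cong\Z$ either trivially or by inversion, and in the inversion case one checks the cocycle must be trivial — shows that $A$ is isomorphic to exactly one of $\Z$ (with $C=2\Z$), $\Z\times C_2$, or $C_2*C_2$, with the copy of $C$ unique up to an automorphism of $A$; the same applies to $B$. The decisive simplification is that an index-$2$ subgroup is normal, so $C$ is normal in $\langle A,B\rangle=G$ with $G/C=(A/C)*(B/C)=C_2*C_2$; conversely, given any extension $1\to\Z\to G\to C_2*C_2\to1$, taking $A$ and $B$ to be the preimages of the two free factors recovers $G$ as such an amalgam, by Bass--Serre theory. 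Thus the amalgams in question are precisely the realisations of the finitely many extensions $1\to\Z\to G\to C_2*C_2\to1$.

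I would then run through the six pairs of factors (up to swapping $A$ and $B$), eliminate the amalgamated generator $c$ to produce a two- or three-generator presentation in each case, and match it with one of (i)--(vi): for instance $(\Z,\Z)$ gives $\langle a,b\,|\,a^2=b^2\rangle$, which is the Klein bottle group $K=BS(1,-1)$ occurring in both lists, and $(C_2*C_2,C_2*C_2)$ gives $\Z^2\rtimes C_2$ with $C_2$ inverting. Equivalently one classifies the extensions cohomologically: with trivial $C_2*C_2$-action on $\Z$ one has $H^2(C_2*C_2;\Z)=H^2(C_2;\Z)^2=(\Z/2)^2$, which yields three isomorphism types, and for the (two, up to $\mathrm{Out}(C_2*C_2)$) nontrivial actions one computes $H^2$ similarly and obtains the remaining three. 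The bulk of the work — and the main obstacle — is the resulting bookkeeping: checking that each pair genuinely produces the claimed group, that every group on the list really arises and really splits over $\Z$ as stated, and above all that the groups are pairwise non-isomorphic. For the last point the useful invariants are the abelianisation, the isomorphism types of the finitely many index-$2$ subgroups, the centre, and the number of conjugacy classes of involutions; one must use more than one of these, since several of the listed groups are virtually $\Z^2$ and share an abelianisation, so they have to be separated with some care.
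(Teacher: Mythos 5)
Your proposal is correct, and its skeleton coincides with the paper's: both use Proposition \ref{nofree}(ii) to force the base of the HNN extension to be $\Z$ (your second application of that proposition is redundant, since the first already gives $A=H$ or $B=H$, but it does no harm), and both use Proposition \ref{nofree}(i) to force $[A:C]=[B:C]=2$, classify the possible vertex groups as $\Z$, $\Z\times C_2$ and $C_2*C_2$, and then reduce to a finite enumeration whose verification (explicit presentations, pairwise non-isomorphism via invariants) is left as routine in both texts. Where you genuinely diverge is the organizing device for that enumeration: the paper simply writes down the six amalgam presentations, relying on the remark that the index-$2$ copy of $\Z$ in each vertex group is unique up to automorphism, and then distinguishes the six groups (and separates the last five from the Baumslag--Solitar groups, which is needed for the parenthetical claim in (i) and which you do not explicitly address) by computing abelianisations of the groups and of their index-$2$ subgroups. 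You instead observe that $C$ is normal in $G$ with $G/C\cong C_2*C_2$, so the groups in question are exactly the extensions $1\to\Z\to G\to C_2*C_2\to 1$ (the converse via Bass--Serre is right), and you count these by $H^2(C_2*C_2;\Z)$ for the three possible actions ($(\Z/2)^2$, $\Z/2$ and $0$), recovering at most $4+2+1$ classes. That viewpoint buys a cleaner treatment of the ambiguities the paper glosses (which index-$2$ subgroup, which identification of the two copies of $C$) and an independent check on the count of six; its cost is that cohomologically distinct extensions, or extensions with different actions, could a priori still give isomorphic groups, so the pairwise non-isomorphism step --- which you rightly flag, noting that abelianisation alone does not separate, e.g., (i), (ii), (iii) --- carries the same weight as in the paper's direct route, where the paper settles it with abelianisations of index-$2$ subgroups.
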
 
\begin{proof} The HNN extension case follows directly from Proposition
\ref{nofree} (ii) on putting $A$ or $B$ equal to $\Z$. In the amalgamated
free product case we have that $A$ and $B$ contain $\Z$ with index 2,
meaning that they are isomorphic to (a) $\Z$ itself, (b) $\Z\times C_2$
or (c) $C_2*C_2$. Now $\Z$ and $C_2*C_2$ each have a unique index 2 subgroup
that is isomorphic to $\Z$ whereas $\Z\times C_2$ has two, but these
are equivalent under an automorphism. Therefore we can write out standard
presentations for all three groups and then for the six possible
amalgamations, which we have done above in the order
(a)-(a),(a)-(b),(a)-(c),(b)-(b),(b)-(c),(c)-(c). We have also performed
some tidying up of the resulting presentations, and calculated
the abelianisation of these groups and their index 2 subgroups which
distinguishes these six groups, as well as distinguishing the last five
from any Baumslag - Solitar group.
\end{proof}

We now consider all other finitely generated
free products amalgamated over $\Z$, where we can prove SQ-universality in
most cases.
\begin{thm} \label{amsq}
Suppose that $G$ is finitely generated and is equal to a non trivial
amalgamated free product
$A*_CB$ for $C=\langle c\rangle\cong\Z$. If $\langle c\rangle$ is not
$s$-normal in $G$, or $\langle c\rangle$ is $s$-normal in $G$ and
$c$ is balanced in $G$
but $G$ is not one of the six groups listed in Proposition \ref{excpt},
or if $\langle c\rangle$ is $s$-normal in $G$ but $c$ is not balanced
in $A$ and not balanced in $B$ then $G$ is SQ-universal.
\end{thm}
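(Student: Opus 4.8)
The plan is to split into the three stated cases and reduce each to an already-available SQ-universality result. In the first case, where $\langle c\rangle$ is not $s$-normal in $G$, Corollary \ref{cortwo}(i) tells us $G$ is acylindrically hyperbolic or virtually cyclic; since a virtually cyclic group cannot be a non-trivial amalgam over $\Z$ (one factor would equal $C$), $G$ is acylindrically hyperbolic and hence SQ-universal by \cite{os}. In the second case, where $\langle c\rangle$ is $s$-normal and $c$ is balanced in $G$, the argument in the proof of Theorem \ref{splz} produces $p>0$ with $\langle c^p\rangle$ normal in $G$; I would then pass to $\bar G=G/\langle c^p\rangle$ and show it is again a non-trivial amalgam — now over the finite group $C/\langle c^p\rangle$ — with the induced factors $\bar A,\bar B$. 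Provided $\bar C$ has index $>2$ in one factor and $\geq 2$ in the other, SQ-universality of $\bar G$ (hence of $G$, since SQ-universality passes up to any group surjecting onto an SQ-universal one) follows from \cite{loss} (and from \cite{sch} in the free-product case $\bar C=1$). The remaining configuration, where $\bar C$ has index $\leq 2$ in both factors, forces $C$ to have index $\leq 2$ in both $A$ and $B$, so by Proposition \ref{nofree}(i) and Proposition \ref{excpt} $G$ is one of the six listed groups or the soluble Baumslag--Solitar group $BS(1,-1)=K$ — but $K$ is balanced, so it is covered by the exclusion of the six groups; the case-analysis here needs care because $BS(1,-1)$ appears both as an HNN extension and as the amalgam $\langle a,b\mid a^2=b^2\rangle$, and one must check it is genuinely among the six excluded presentations.

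In the third case, where $\langle c\rangle$ is $s$-normal but $c$ is unbalanced in both $A$ and $B$, the strategy is different: here I would \emph{not} quotient out a power of $c$ but instead use the modular homomorphisms $\Delta_c^A$ and $\Delta_c^B$ directly. Since $c$ is unbalanced in $A$ there is $a\in A$ with $a c^{m} a^{-1}=c^{n}$, $|m|\neq|n|$, and similarly $b\in B$ with $b c^{r} b^{-1}=c^{s}$, $|r|\neq |s|$; these witness that $A$ (and $B$) each contain a Baumslag--Solitar subgroup $BS(m,n)$ with $|m|\ne|n|$ inside the subgroup generated by $c$ and the relevant element. The point is to exhibit inside $G$ a suitable pair of elements generating a non-elementary configuration on the Bass--Serre tree to which Theorem \ref{aomn} does \emph{not} apply (because $\langle c\rangle$ is $s$-normal), so instead I expect to need a hyperbolically embedded subgroup or a small-cancellation/loxodromic WPD argument: find an element of $G$ acting loxodromically on the Bass--Serre tree and show, using the unbalancedness on both sides, that the pointwise stabiliser of its axis is small enough that $G$ acts acylindrically-like on a suitable space, or alternatively build an explicit free quotient. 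A cleaner route may be to observe that the unbalanced conjugations on the two sides have \emph{incompatible} modular data ($\Delta_c^A$ and $\Delta_c^B$ need not agree), which should prevent $\langle c\rangle$ from remaining normal in any quotient and instead let one produce, via \cite{os}-style Dehn filling applied to the hyperbolically embedded $\langle c\rangle$ inside each factor's action, many quotients into which an arbitrary countable group embeds.

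The main obstacle is exactly this third case: unlike the balanced case there is no normal cyclic subgroup to quotient and unlike the non-$s$-normal case Corollary \ref{cortwo} gives nothing, so one cannot invoke acylindrical hyperbolicity of $G$ itself — indeed $G$ may fail to be acylindrically hyperbolic. The delicate work is to show that \emph{unbalancedness on both factors} nonetheless forces SQ-universality, presumably by finding a quotient $G\twoheadrightarrow \bar G$ in which the image of $\langle c\rangle$ becomes finite (killing a suitable product of powers dictated by both $\Delta_c^A$ and $\Delta_c^B$) while keeping $\bar A\ne \bar C\ne \bar B$, after which \cite{loss} or \cite{sch} applies as in the balanced case; proving that such a quotient exists and is still a non-trivial amalgam — rather than collapsing a factor — is where the real care is needed, and it is the step I would expect to occupy the bulk of the argument. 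The first two cases, by contrast, I expect to dispatch in a few lines each once the reduction to \cite{loss}/\cite{sch}/\cite{os} is set up.
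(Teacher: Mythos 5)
Your first two cases are essentially the paper's argument: non-$s$-normal gives acylindrical hyperbolicity via Corollary \ref{cortwo}(i), and in the balanced $s$-normal case one quotients by the normal subgroup $\langle c^p\rangle$ coming from the proof of Theorem \ref{splz} to get an amalgam over a finite cyclic group with the same indices, so \cite{loss} applies unless both indices are $2$, which is exactly the exclusion of the six groups of Proposition \ref{excpt} (your worry about $BS(1,-1)$ is harmless, since the Klein bottle group is item (i) of that list). Those two reductions are fine.

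The genuine gap is your third case, which you yourself leave open. The paper's argument here is short and you walked right past it: since $c$ is unbalanced in $A$ and in $B$, the maps $|\Delta_c^A|\colon A\to\Q^+$ and $|\Delta_c^B|\colon B\to\Q^+$ are both nontrivial homomorphisms onto infinite torsion-free abelian groups, and the amalgamated subgroup $\langle c\rangle$ lies in the kernel of each (conjugation by $c$ fixes every power of $c$, so $\Delta_c(c^j)=1$). Hence the two homomorphisms agree (trivially) on $C$, and by the universal property of the amalgam they glue to a single homomorphism from $G=A*_CB$ onto the free product of the two infinite abelian images. A non-trivial free product of two infinite groups is SQ-universal, and SQ-universality passes to any group surjecting onto an SQ-universal group, so $G$ is SQ-universal. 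No loxodromic/WPD analysis, no Dehn filling, and no quotient in which $c$ becomes of finite order is needed; indeed your proposed route of making the image of $\langle c\rangle$ finite while keeping a non-trivial amalgam is precisely the step you admit you cannot carry out, and it is unnecessary. Note also that your suggestion that the ``incompatible modular data'' should drive the argument points in the wrong direction: what makes this case easy is not any incompatibility of $\Delta_c^A$ and $\Delta_c^B$ but the fact that both kill $C$, which is exactly what lets them be combined.
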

\begin{proof} 
By Corollary \ref{cortwo} (i) we know that $G$ is acylindrically hyperbolic
and hence SQ-universal unless $\langle c\rangle$ is $s$-normal in $G$, which
will be assumed for the rest of the proof.
First suppose that $c$ is balanced in $G$, so as in the proof of 
Theorem \ref{splz} we have $p>0$ such that $\langle c^p\rangle$ is normal 
in $G$, and hence
in $A$ and $B$. This means that on quotienting both $A$ and $B$ by this
infinite cyclic normal subgroup to obtain $\overline{A}$ and
$\overline{B}$, in each of which the image $\overline{c}$ has order $p$,
our amalgamated free product $A*_CB$ factors through
$\overline{A}*_{\overline{C}}\overline{B}$ where $\overline{C}=
\langle\overline{c}\rangle$ is cyclic of order $p$. Moreover
$\langle c^p\rangle$
is contained in $C$, so that the indices $[\overline{A}:\overline{C}]=
[A:C]$ and $[\overline{B}:\overline{C}]=[B:C]$ are unchanged. 
But $[A:C],[B:C]>1$ by assumption and at least one of these indices
is greater than two (or else we are back in the case of Proposition
\ref{excpt}),
so by \cite{loss} we have that $\overline{A}*_{\overline{C}}\overline{B}$ is 
SQ-universal and hence $G$ is also. 

Now we assume that $c$ is not balanced in $A$, nor in $B$. This
means that the modular homomorphisms $|\Delta_c^A|$ from $A$
to $\Q^+$ and $|\Delta_c^B|$ from $B$ to $\Q^+$ are both
maps to infinite torsion free abelian
groups with $\langle c\rangle$ in the kernel, thus we can put them together
to obtain a homomorphism from $G$ to the free product of these infinite
abelian groups, thus here $G$ is SQ-universal.
\end{proof}

Thus we are left with $c$ being balanced in $A$ but not in $B$
(or vice versa), whereupon
we cannot always say here that $A*_CB$ is SQ-universal. However there
are subcases where we can obtain this conclusion so we look at this in a
little more detail. By D.\,E.\,Cohen's comment (\cite{coh} Section 3)
we have that in an amalgamated free product $A*_CB$, if $G$ and $C$ are
finitely generated then so are $A$ and $B$.
\begin{prop} \label{para}
Suppose that $G=A*_CB$ is a non trivial amalgamated free
product with $C=\langle c\rangle$ infinite cyclic and $G$ finitely
generated. Suppose also that $\langle c\rangle$ is $s$-normal in $G$
(thus in $A$ and in $B$)
and $c$ is balanced in $A$ but not in $B$, so that (as 
Cohen's comment
tells us $A$ is finitely generated) there is $p>0$ with $\langle c^p\rangle$
normal in $A$. 

If there exists an integer $k>0$ such that\\
(i) the normal closure $\llangle c^k\rrangle^A$ of the element
$c^k$ in $A$ does not contain any of the elements
$c,c^2,\ldots ,c^{k-1}$, with the same holding for the normal closure
$\llangle c^k\rrangle^B$ of $c^k$ in $B$, and\\
(ii) the quotient of $A$ by the normal closure $\llangle c^k\rrangle^A$
has order greater than $k$\\
then $G$ is SQ-universal.

In particular if $\llangle c\rrangle^A$ is not equal to $A$, so that
(ii) holds for $k=1$ with (i) then holding automatically, or if there
is $k$ which is a multiple of $p$ such that (i) holds just for
$\llangle c^k\rrangle^B$, then $G$ is SQ-universal.        
\end{prop}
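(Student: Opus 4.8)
The plan is to produce, from the hypotheses, a surjection of $G$ onto an amalgamated free product over a \emph{finite} cyclic group to which the SQ-universality results already quoted apply, and then use that SQ-universality passes from a quotient to the whole group.

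First recall from Cohen's comment (\cite{coh}) that, $G$ and $C$ being finitely generated, so are $A$ and $B$; and, exactly as in the proof of Theorem \ref{splz}, $c$ being balanced in $A$ gives $p>0$ with $\langle c^p\rangle\lhd A$, so that $A$ acts on $\langle c^p\rangle\cong\Z$ by $\pm1$. Now suppose $k>0$ satisfies (i) and (ii). Put $N_A=\llangle c^k\rrangle^A$, $N_B=\llangle c^k\rrangle^B$, $\overline{A}=A/N_A$, $\overline{B}=B/N_B$. Condition (i) says precisely that $N_A\cap\langle c\rangle=\langle c^k\rangle=N_B\cap\langle c\rangle$, so the image of $C=\langle c\rangle$ in each of $\overline{A}$ and $\overline{B}$ is a cyclic group $\overline{C}$ of order exactly $k$, generated by the image $\overline{c}$ of $c$. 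The quotient maps $A\to\overline{A}$ and $B\to\overline{B}$ therefore agree on $C$, so by the universal property of the amalgamated free product they induce a surjection of $G=A*_CB$ onto $\overline{G}:=\overline{A}*_{\overline{C}}\overline{B}$. Since any countable group embedding in a quotient of $\overline{G}$ also embeds in a quotient of $G$, it suffices to show $\overline{G}$ is SQ-universal.

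Here $\overline{C}$ is finite of order $k$, and (ii) gives $|\overline{A}|>k=|\overline{C}|$, so $\overline{C}$ is a proper subgroup of $\overline{A}$. After checking that $\overline{C}$ is also proper in $\overline{B}$ and that $\overline{G}$ is not the one virtually cyclic exception (where $\overline{C}$ has index $2$ in both factors, e.g. $C_2*C_2$ when $k=1$), \cite{loss} applies when $k\geq 2$, while for $k=1$ the group $\overline{G}=\overline{A}*\overline{B}$ is a non-trivial free product, SQ-universal by \cite{sch}; hence $G$ is SQ-universal. For the two special cases: if $\llangle c\rrangle^A\neq A$ take $k=1$, so the list $c,\ldots,c^{k-1}$ is empty (making (i) vacuous) and (ii) reads $\llangle c\rrangle^A\neq A$; and if $k$ is a multiple of $p$ with (i) holding for $N_B$, then since $\langle c^p\rangle\lhd A$ with $A$ acting on it by $\pm1$ we get $\langle c^k\rangle\lhd A$, so $N_A=\langle c^k\rangle$ contains none of $c,\ldots,c^{k-1}$ (giving (i) for $A$ automatically), and $|A/\langle c^k\rangle|>k$ is equivalent to $A\neq\langle c\rangle=C$, which holds as the amalgam is non-trivial.

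The main obstacle is the verification flagged in the previous paragraph: that passing to $\overline{A},\overline{B}$ keeps $\overline{C}$ a proper subgroup of \emph{both} factors and avoids the single virtually cyclic degeneration, so that \cite{loss} (or \cite{sch}) genuinely applies to $\overline{G}$; this is where one must use the non-triviality of $A*_CB$ together with $\langle c\rangle$ being $s$-normal in $B$ while $c$ is not balanced there (in particular to rule out $\overline{B}$ collapsing to $\overline{C}$). Everything else is routine bookkeeping with normal closures and indices.
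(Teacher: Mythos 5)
Your overall strategy is exactly the paper's: kill $c^k$ in each factor, observe (i) forces the image of $c$ to have order exactly $k$, pass to the quotient amalgam $\overline{A}*_{\overline{C}}\overline{B}$ over the finite cyclic group $\overline{C}$, and invoke \cite{loss} (or a non-trivial free product for $k=1$). But the proof as written has a genuine gap, and it sits precisely at the step you defer with ``after checking'' and then concede in your final paragraph is ``the main obstacle'': you never establish that $\overline{C}$ is proper in $\overline{B}$ --- let alone of index greater than $2$ --- and without that \cite{loss} does not apply. Note that Lossov's theorem needs the finite amalgamated subgroup to have index greater than $2$ in one factor and at least $2$ in the other; condition (ii) only gives index at least $2$ in $\overline{A}$, so if all you knew about $\overline{B}$ were that $\overline{C}$ is proper (or even index $2$) in it, the hypotheses of \cite{loss} would simply not be met, and for $k=1$ nothing you have written rules out $\overline{B}$ being trivial, which would destroy the free-product argument.

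The missing argument is exactly where the hypothesis ``$\langle c\rangle$ is $s$-normal in $B$ and $c$ is not balanced in $B$'' gets used, and it is the heart of the paper's proof: since $\langle c\rangle$ is $s$-normal in $B$, the modular homomorphism $|\Delta_c^B|\colon B\to\Q^+$ is defined on all of $B$; since $c$ is not balanced in $B$ its image is a non-trivial, hence infinite torsion-free abelian, subgroup of $\Q^+$; and $c$ lies in its kernel, so $|\Delta_c^B|$ factors through $B/\llangle c\rrangle^B$ and a fortiori through $\overline{B}=B/\llangle c^k\rrangle^B$. Thus $\overline{B}$ is infinite while the image of $\langle c\rangle$ in it has order $k$, so $\overline{C}$ has \emph{infinite} index in $\overline{B}$. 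This is what lets (ii) (index $>1$ on the $\overline{A}$ side) suffice for \cite{loss}, makes the $k=1$ free product non-trivial and distinct from $C_2*C_2$, and in the second special case (where (i) is only assumed for $\llangle c^k\rrangle^B$ with $p\mid k$) is again needed on the $B$ side, while normality of $\langle c^k\rangle$ in $A$ handles the $A$ side as you correctly argue. Supply this paragraph and your proof closes up and coincides with the paper's.
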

\begin{proof} Condition (i) 
implies that in both quotients $A/\llangle c^k\rrangle^A$ and
$B/\llangle c^k\rrangle^B$, the element $c$ has order exactly $k$. 
If so then $B/\llangle c^k\rrangle^B$ is infinite because it
surjects $B/\llangle c\rrangle^B$ which itself surjects an
infinite abelian group using $|\Delta_c^B|$, as
$c$ is not balanced in $B$. In particular 
the image of $\langle c\rangle$ in this quotient $B/\llangle c^k\rrangle^B$
of $B$ has infinite index.
Hence the amalgamated free product
$(A/\llangle c^k\rrangle^A)*_{\langle c\rangle}(B/\llangle c^k\rrangle^B)$ is
a quotient of $A*_CB$ and the former is SQ-universal by \cite{loss}
provided only that $\langle c\rangle$ has index greater than 1 in
$A/\llangle c^k\rrangle^A$, which is condition (ii).

As for the particular cases mentioned, taking $k=1$ gives us the free
product $(A/\llangle c\rrangle^A)*(B/\llangle c\rrangle^B)$. If the
left hand factor is non trivial then this will be a non
trivial free product not equal to $C_2*C_2$, and therefore is
SQ-universal. Meanwhile if
$c$ has order exactly $k$ in $B/\llangle c^k\rrangle^B$ for $p$
dividing $k$ then we know $c$ also has order $k$ in $A/\llangle c^k\rrangle^A$ 
because $\langle c^p\rangle$ and then $\langle c^k\rangle$ is normal in $A$,
with $[A/\langle c^k\rangle:C/\langle c^k\rangle]=[A:C]>1$ so (ii) holds
as well.
\end{proof}

We now move to HNN extensions.
\begin{thm} \label{hnn}
If $G$ is a finitely generated group which is an HNN extension
$H*_{tAt^{-1}=B}$ with base $H$ and stable letter $t$ conjugating the
infinite cyclic subgroup $A=\langle a\rangle$ of $H$ to $B=\langle b\rangle$
via $tat^{-1}=b$ then $G$ is SQ-universal or $\Z^2$ or the Klein bottle
group, with the possible exception
of when $\langle a\rangle$ is $s$-normal in $H$ and
there exist integers $r,s$ with $a^r=b^s$ for $|r|\neq |s|$
but $a$ (equivalently $b$) is balanced in $H$.
\end{thm}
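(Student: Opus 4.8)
The plan is to follow the template of Theorem \ref{amsq}: apply the HNN form of Corollary \ref{cortwo}, and where that fails use the modular homomorphism $\Delta_a^G$ of $a$ in $G$ to split into balanced and unbalanced cases, exhibiting in each a quotient of $G$ already known to be SQ-universal (by acylindrical hyperbolicity via \cite{os}, by \cite{sch}, or by \cite{aras}). First I would dispose of the ascending HNN extensions. If $A=H$ (or, symmetrically, $B=H$) then $H=\langle a\rangle\cong\Z$, so $G=\langle a,t\mid tat^{-1}=a^n\rangle=BS(1,n)$ for some $n\neq 0$; here $|n|=1$ gives $\Z^2$ or the Klein bottle group, while for $|n|\geq 2$ one checks at once that $a$ is balanced in $H=\Z$, that $\langle a\rangle$ is (trivially) $s$-normal in $H$, and that $a^n=b$ with $|n|\neq 1$, so that $G$ falls under the stated exception. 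Henceforth the extension is non-ascending.

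By Corollary \ref{cortwo}(ii), $G$ is acylindrically hyperbolic, hence SQ-universal, unless $\langle a\rangle$ is $s$-normal in $G$, which I now assume. Since $H\leq G$ this makes $\langle a\rangle$ $s$-normal in $H$, and since $t\langle a\rangle t^{-1}\cap\langle a\rangle=\langle b\rangle\cap\langle a\rangle$ is infinite there are nonzero integers $r,s$ with $a^r=b^s$; applying $\Delta_a^G$ to $ta^st^{-1}=b^s=a^r$ gives $\Delta_a^G(t)=s/r$. Suppose first that $a$ is not balanced in $H$, so that $|\Delta_a^H|\colon H\to\Q^+$ is a non-trivial homomorphism. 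As $b^s=a^r$ centralises $a$ we get $\Delta_a^H(b)^s=1$ in $\Q^*$, hence $b\in\ker|\Delta_a^H|$; therefore $\llangle a,b\rrangle^H$ lies in that kernel and $\overline H:=H/\llangle a,b\rrangle^H$ still maps onto the non-trivial group $|\Delta_a^H|(H)\leq\Q^+$, so $\overline H\neq 1$. Killing $a$ (and hence its conjugate $b$) in the base then shows that $G$ surjects onto $\overline H*\Z$, a non-trivial free product not equal to $C_2*C_2$, which is SQ-universal by \cite{sch}; hence so is $G$.

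It remains to treat the case $a$ balanced in $H$. Then $\Delta_a^G(H)\subseteq\{\pm 1\}$ and $\Delta_a^G(G)=\langle\,\pm 1,\ s/r\,\rangle$, so $a$ is balanced in $G$ exactly when $|r|=|s|$; when $|r|\neq|s|$ we are precisely in the excluded case of the statement (with Proposition \ref{parahnn} recording what can still be said there). So assume $a$ is balanced in $G$; as in the proof of Theorem \ref{splz}, finite generation of $G$ gives $p>0$ with $\langle a^p\rangle$ normal in $G$, hence $t$-invariant. Quotienting $G$ by $\langle a^p\rangle$ then yields an HNN extension $\overline G=\overline H*_{\,\overline t\,\overline A\,\overline t^{-1}=\overline B}$ with $\overline H=H/\langle a^p\rangle$ and with $\overline A,\overline B$ finite cyclic of order $p$ (the order of $\overline B$ being forced by the isomorphism $\overline A\cong\overline B$ that $t$-invariance of $\langle a^p\rangle$ provides); moreover $\overline A<\overline H$ because $A<H$, likewise $\overline B<\overline H$, and $\overline H\neq 1$ since $H=\langle a^p\rangle$ would force $H=\langle a\rangle=A$. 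Thus $\overline G$ is an HNN extension over a finite proper associated subgroup, so SQ-universal by \cite{aras}, and therefore so is $G$.

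The genuine obstacle — the reason the theorem must carry an exception at all — is the mixed subcase in which $a$ is balanced in $H$ but $a^r=b^s$ with $|r|\neq|s|$: there $|\Delta_a^H|$ yields nothing, yet $a$ is truly unbalanced in $G$, and these methods produce no SQ-universal quotient. The remaining points are routine: checking that quotienting by $\langle a^p\rangle$ really yields an HNN extension with the stated finite associated subgroups and that non-ascendingness survives (both because $\langle a^p\rangle$ is normal in $G$ and contained in $A$), and the bookkeeping matching the ascending case with $BS(1,n)$ and the ``$a$ balanced in $H$, $|r|\neq|s|$'' situation with the statement's exceptional clause.
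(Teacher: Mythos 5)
Your proposal is correct and follows essentially the same route as the paper: apply Corollary \ref{cortwo}(ii), then when $\langle a\rangle$ is $s$-normal kill $a$ and $b$ to get the free product quotient $(H/\langle\langle a,b\rangle\rangle^H)*\Z$ when $a$ is unbalanced in $H$, and when $a$ is balanced (with $|r|=|s|$) pass to a quotient HNN extension over finite cyclic subgroups and invoke \cite{aras} Corollary 1.3. The only minor differences are that you dispose of the ascending case ($BS(1,n)$, giving $\Z^2$/Klein bottle or the exceptional clause) up front instead of letting it emerge from the final \cite{aras} step, and you obtain $\langle a^p\rangle$ normal in all of $G$ directly from finite generation of $G$ (as in Theorem \ref{splz}) rather than using Cohen's comment to get $\langle a^{p|r|}\rangle$ normal in $H$ as the paper does; both variations are sound.
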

\begin{proof}
By Corollary \ref{cortwo} we have that $G$ is acylindrically hyperbolic and
hence SQ-universal unless $\langle a\rangle$ is $s$-normal in $G$. This
certainly implies that $\langle a\rangle$ is $s$-normal in $H$, but also
that there are non zero integers $r,s$ with
$a^r=b^s$ because otherwise $A\cap B$ is trivial with $A$
conjugate to $B$ in $G$, thus $A$ is not $s$-normal in $G$.
Now our HNN
extension $G$ factors through the HNN extension
with base $H/\llangle a,b\rrangle^H$ and trivial edge subgroups, which is
the free product $(H/\llangle a,b\rrangle^H)*\Z$ and thus is SQ-universal
provided the first factor is non trivial. But if $a$ is not balanced in $H$
then $a$, and hence also $b$ because of the relation $a^r=b^s$, 
will be in the kernel of the modular
homomorphism $|\Delta_a^H|$ which here has infinite image in $\Q^+$,
so $H/\llangle a,b\rrangle^H$ is non trivial.

Now say $a$ is balanced in $H$ but $|r|=|s|$. Then as $a$ is $s$-normal
in $H$ too, we have $p>0$ such that
$\langle a^p\rangle$, and hence $\langle a^k\rangle$ for any multiple
$k$ of $p$, is normal in $H$ as before by using
D.\,E.\,Cohen's comment
applied to HNN extensions (namely $G$ and $A$ being finitely generated
imply the base $H$ is finitely generated). Hence
$\langle a^{p|r|}\rangle=\langle b^{p|r|}\rangle$ is
normal in $H$ with $a$ and $b$ both having order exactly $p|r|$ in this
quotient, so $G$ factors through the HNN extension
$(H/\langle a^{p|r|}\rangle)*_{tAt^{-1}=B}$ where now $A$ and $B$ both
map to
finite cyclic groups of order $p|r|$. This HNN extension is
SQ-universal by \cite{aras} Corollary 1.3
unless we have $A/\langle a^{p|r|}\rangle
=B/\langle b^{p|r|}\rangle=H/\langle a^{p|r|}\rangle$,
whereupon we must have had $A=B=H\cong\Z$ initially. This means that
$G$ was $\langle t,h|tht^{-1}=h^{\pm 1}\rangle$, so is $\Z^2$
or the Klein bottle group. 
\end{proof}

As any HNN extension over $\Z$ excluded by Theorem \ref{hnn}   
will have the edge generator $a$ (equivalently $b$) balanced in
the base group but not balanced in the HNN extension because
we would have
$ta^st^{-1}=a^r$ for $|r|\neq |s|$, we see that (just as in Theorem
\ref{amsq}) it is the ``mixed'' case which is
troublesome in proving SQ-universality. Nevertheless here we can still
obtain partial results similar to those in Proposition \ref{para}. 
\begin{prop} \label{parahnn}
Suppose that
$G$ is a finitely generated group which is an HNN extension
$H*_{tAt^{-1}=B}$ with base $H$ and stable letter $t$ conjugating the
infinite cyclic subgroup $A=\langle a\rangle$ of $H$ to $B=\langle b\rangle$
via $tat^{-1}=b$, such that $\langle a\rangle$ is $s$-normal in $H$ and
there are integers $r,s$ with $a^r=b^s$ for $|r|\neq |s|$ 
but $a$ (equivalently $b$) is balanced in $H$.

If there exists an integer $k>0$ such that\\
(i) the normal closure 
$\llangle a^k,b^k\rrangle^H=
\llangle a^k\rrangle^H\llangle b^k\rrangle^H$ of the elements
$a^k,b^k$ in $H$ does not contain any of the elements
$a,a^2,\ldots ,a^{k-1}$ or $b,b^2,\ldots ,b^{k-1}$, and\\
(ii) the quotient of $H$ by
$\llangle a^k,b^k\rrangle^H$ has order greater than $k$\\
then $G$ is SQ-universal.

In particular if $\llangle a,b\rrangle^H$ is not equal to $H$, so that
(ii) holds for $k=1$ with (i) then holding automatically, 
then $G$ is SQ-universal.        
\end{prop}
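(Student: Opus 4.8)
The plan is to mimic the proof of Proposition~\ref{para}: pass to the quotient of the base $H$ by the normal subgroup $N:=\llangle a^k,b^k\rrangle^H=\llangle a^k\rrangle^H\llangle b^k\rrangle^H$, check that this produces an HNN extension over a finite cyclic group which is a quotient of $G$, and then quote the SQ-universality of HNN extensions over finite subgroups. First I would record what condition~(i) buys. Since $a^k,b^k\in N$, the images $\overline{a},\overline{b}$ of $a,b$ in $\overline{H}:=H/N$ have orders dividing $k$; since $N$ contains none of $a,a^2,\dots,a^{k-1}$ nor any of $b,b^2,\dots,b^{k-1}$, these orders are exactly $k$. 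Equivalently $N\cap\langle a\rangle=\langle a^k\rangle$ and $N\cap\langle b\rangle=\langle b^k\rangle$, so the defining isomorphism $\langle a\rangle\to\langle b\rangle$, $a\mapsto b$, of the HNN extension descends to an isomorphism $\overline{\phi}\colon\langle\overline{a}\rangle\to\langle\overline{b}\rangle$, $\overline{a}\mapsto\overline{b}$, between cyclic groups of order $k$.

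Next I would form $\overline{G}:=\langle\overline{H},t\mid t\overline{a}\,t^{-1}=\overline{b}\rangle$, the HNN extension of $\overline{H}$ with stable letter $t$ and finite associated subgroups $\langle\overline{a}\rangle,\langle\overline{b}\rangle$ (this is a genuine HNN extension because $\overline{\phi}$ is an isomorphism). The quotient map $H\to\overline{H}$ together with $t\mapsto t$ carries the single defining relation $tat^{-1}=b$ of $G$ to the relation $t\overline{a}\,t^{-1}=\overline{b}$ that holds in $\overline{G}$, so it extends to a surjection $G\to\overline{G}$; hence $\overline{G}$ is a quotient of $G$ and it suffices to show $\overline{G}$ is SQ-universal. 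Now condition~(ii) says $|\overline{H}|>k$, so $\langle\overline{a}\rangle\neq\overline{H}$ and likewise $\langle\overline{b}\rangle\neq\overline{H}$; thus $\overline{G}$ is an HNN extension over a finite subgroup which is not the exceptional case where both associated subgroups equal the whole base, and \cite{aras} Corollary~1.3 (exactly as used in the proof of Theorem~\ref{hnn}) gives that $\overline{G}$, and hence $G$, is SQ-universal. In the degenerate case $k=1$ one instead observes that $\overline{G}=\overline{H}*\Z$ is a non-trivial free product different from $C_2*C_2$ (as $|\overline{H}|>1$), hence SQ-universal by \cite{sch}. For the final ``in particular'' clause, take $k=1$: then condition~(i) is vacuous and~(ii) is precisely the hypothesis $\llangle a,b\rrangle^H\neq H$, so the general statement applies.

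The main thing to get right — and essentially the only place where condition~(i) is genuinely needed — is the verification that the HNN isomorphism descends to the quotient, i.e.\ that $N\cap\langle a\rangle$ equals $\langle a^k\rangle$ exactly (and not some $\langle a^j\rangle$ with $j<k$); this is what guarantees that $\overline{a}$ and $\overline{b}$ have the common order $k$, that $\overline{G}$ really is an HNN extension of $\overline{H}$ over $\Z/k$, and hence that $\overline{G}$ is a bona fide quotient of $G$. After that the argument is bookkeeping together with the cited SQ-universality results, with condition~(ii) used only to sidestep the elementary exception in \cite{aras} Corollary~1.3. It is worth noting that the standing hypotheses of the proposition describing the ``leftover'' case — that $\langle a\rangle$ is $s$-normal in $H$, that $a^r=b^s$ with $|r|\neq|s|$, and that $a$ is balanced in $H$ — are not actually invoked anywhere in this argument; they merely situate the statement relative to Theorem~\ref{hnn}.
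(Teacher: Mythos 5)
Your proposal is correct and follows essentially the same route as the paper: quotient the base by $N=\llangle a^k,b^k\rrangle^H$, use (i) to get that $a$ and $b$ have exact order $k$ so the induced HNN extension over $\Z/k$ is well defined and is a quotient of $G$, then use (ii) to avoid the exceptional case of \cite{aras} Corollary 1.3. Your extra checks (that the associated isomorphism descends, and the separate $k=1$ free-product observation) only make explicit what the paper leaves implicit.
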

\begin{proof}
On setting $N=\llangle a^k,b^k\rrangle^H$ we see that $a$ and $b$ both have
exact order $k$ in $H/N$ so the HNN extension $(H/N)*_{tat^{-1}=b}$ is
well defined, as $a$ and $b$ both have order $k$ in $H/N$, and it is a
quotient of $G$. Moreover neither $a$ nor $b$ generate $H/N$ from (ii)
so $G$ is SQ-universal by \cite{aras} Corollary 1.3.
\end{proof}

To finish,
if $G$ is finitely generated, splits over $\Z$ and is a balanced group
then we have a complete result. Indeed we have a trichotomy (reading
like \cite{asos} Theorem 2.8 but
there the groups being considered are all subgroups of fundamental groups
of compact 3-manifolds) which says
that $G$ is acylindrically hyperbolic, or is so on quotienting
out by an infinite cyclic normal subgroup (and therefore is SQ-universal
because this is preserved by prequotients), or is one of a very few small
exceptions. Indeed we only need the edge group generators to be balanced
for this to hold.
\begin{co} \label{balco}
Suppose that $G$ is a finitely generated group which equals the 
fundamental group of
a non trivial graph of groups with infinite cyclic edge groups
and where the edge group generators are balanced in $G$.
Then one of the following mutually exclusive cases occurs:\\
(i) $G$ is acylindrically hyperbolic\\
(ii) $G$ has an infinite
cyclic normal subgroup $Z$ such that $G/Z$ is relatively hyperbolic
but not virtually cyclic and so is acylindrically hyperbolic\\
(iii) $G$ is isomorphic to $\Z^2$ or one of the six groups
listed in Proposition \ref{excpt}.\\
In particular $G$ is SQ-universal
or virtually abelian.
\end{co}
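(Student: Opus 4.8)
First I would reduce to the two basic cases using the argument already established in the proof of Theorem \ref{splz}: since $G$ is finitely generated we may take $\Gamma$ finite and reduce the graph of groups, so either $G=A*_CB$ with $\Z\cong C=\langle c\rangle\neq A,B$, or $G=H*_{tAt^{-1}=B}$ with $A,B\cong\Z$. In either case let $c$ (respectively $a$) be the relevant edge generator, which is balanced in $G$ by hypothesis. I would then split on whether the edge group is $s$-normal in $G$.

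If the edge group is \emph{not} $s$-normal in $G$, then Theorem \ref{aomn} (via Corollary \ref{cortwo}) tells us $G$ is virtually cyclic or acylindrically hyperbolic; a finitely generated virtually cyclic group splitting non-trivially over $\Z$ is $\Z^2$ only if... actually it cannot be infinite virtually cyclic and split non-trivially over $\Z$ with infinite-index edge group, so here we land in case (i), or the group is too small and we are in case (iii). If the edge group \emph{is} $s$-normal, then since the generator is balanced in $G$, the argument in Theorem \ref{splz} produces $p>0$ with $Z:=\langle c^p\rangle$ (resp. $\langle a^p\rangle$) normal in $G$; moreover $Z$ is normal in every vertex group. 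Quotienting $G$ by $Z$ yields a graph of groups with the same underlying graph but now all edge groups finite cyclic of order $p$ (the indices $[A:C],[B:C]$, resp. the HNN data, being preserved as in Theorems \ref{amsq} and \ref{hnn}). Such a group $\bar G=G/Z$ is the fundamental group of a finite graph of groups with finite edge groups, hence by the combination theorems for relatively hyperbolic groups it is hyperbolic relative to its vertex groups; it is virtually cyclic precisely in the degenerate low-complexity configurations, which by Proposition \ref{excpt} (applied to $\bar G$, which splits over finite $\Z/p$) pull back to $G$ being $\Z^2$ or one of the six listed groups — case (iii). Otherwise $\bar G$ is relatively hyperbolic and not virtually cyclic, so acylindrically hyperbolic by \cite{os}/\cite{aras}, giving case (ii). Mutual exclusivity follows since a group with infinite cyclic $s$-normal (indeed normal-after-power) subgroup is not acylindrically hyperbolic, the small groups in (iii) are virtually abelian hence neither acylindrically hyperbolic nor having acylindrically hyperbolic quotient by $\Z$, and SQ-universality in (i),(ii) is by \cite{os}.

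The main obstacle is handling the degenerate cases cleanly: one must verify that the reduced graph of groups, after quotienting the normal $\langle c^p\rangle$, gives a relatively hyperbolic group that fails to be virtually cyclic \emph{except} exactly when the original $G$ is one of the seven listed exceptions — i.e. matching the "virtually cyclic" escape clause of Theorem \ref{aomn} against the list in Proposition \ref{excpt}. This amounts to checking that a finite graph of groups with finite edge groups and fundamental group virtually cyclic must, after un-quotienting, be $\Z^2$, the Klein bottle group, or one of the six groups of Proposition \ref{excpt}; the abelianisation computations in that proposition and the index constraints do the bookkeeping, but it requires care to cover both the amalgamated and HNN cases and to confirm no extra examples sneak in when the edge group generator is balanced in $G$ but the vertex-group structure is unusual.
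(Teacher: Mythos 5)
Your proposal follows essentially the same route as the paper: reduce to an amalgam or HNN extension, dispose of the non-$s$-normal case via Corollary \ref{cortwo}, and otherwise use balancedness of the edge generator to get a power of it normal in $G$, pass to the quotient, which splits over a finite cyclic group and is hyperbolic relative to its vertex groups, and match the virtually cyclic degenerate configurations against Proposition \ref{excpt} (the paper carries out exactly this bookkeeping through Theorems \ref{amsq} and \ref{hnn}, deriving normality of $Z$ in the HNN case via $a^r=b^s$ with $|r|=|s|$ rather than your direct argument, which also works). The only slip is that Proposition \ref{excpt} applies to $G$ itself (which splits over $\Z$), not to the quotient $\bar G$ splitting over $\Z/p$, but your ``pull back'' phrasing shows you intend the paper's argument, so this is a matter of wording rather than a gap.
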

\begin{proof} 
On applying Theorem \ref{splz} we see that
if $G$ is not acylindrically hyperbolic
then either it is an HNN extension, or it is an amalgamated free
product with an infinite cyclic normal subgroup $Z$ whose generator $c^p$
is a power of the edge group generator $c$, because we are in the case
where $c$ is balanced in $G$.
In the latter case we can
apply Theorem \ref{amsq} with $\langle c\rangle$ being $s$-normal and 
$c$ balanced in $G$ to conclude that either $G/Z$ is an
amalgamation over a finite subgroup which is not virtually cyclic
but which is hyperbolic relative to the vertex groups, or
$G$ is one of the six groups listed in Proposition \ref{excpt}.

If now $G$ is an HNN extension we use Theorem \ref{hnn}, whereupon 
$\langle a\rangle$ $s$-normal in $G$ and $a$ balanced in $G$ implies
that $a$ is balanced in $H$ and $a^r=b^s$ for
$|r|=|s|$. Here we conclude that the infinite cyclic subgroup 
$C=\langle a^{p|r|}\rangle$ is normal not just in the base but in $G$ too
as $a^{p|r|}=b^{\pm p|r|}$ with $ta^{pr}t^{-1}=b^{\pm pr}$. Moreover
the quotient $G/Z$ is again relatively hyperbolic
but not virtually cyclic, or $G$ is $\Z^2$ or the Klein bottle group
with the latter being the first group listed in Proposition \ref{excpt}.  
\end{proof}

\Address
\end{document}